\newtheorem{thm}{Theorem}[section]
\newtheorem{cor}[thm]{Corollary}
\newtheorem{lem}[thm]{Lemma}
\theoremstyle{definition}
\newtheorem{defn}[thm]{Definition}
\theoremstyle{remark}
\newtheorem{rem}[thm]{Remark}
\numberwithin{equation}{section}
\theoremstyle{plain}
\begin{document}
\title{On Multivariate Matsaev's Conjecture}
\keywords{ Matsaev's Conjecture, Fourier multipliers, Schur multipliers, von Neumann inequality, Non-commutative $L^p$-spaces, Joint dilation}

\thanks{The named author acknowledges Council for Scientific and Industrial Research, MHRD, Government of India for financial support during this work.}
%
\author{Samya Kumar Ray}


\address{Samya Kumar Ray: School of Mathematics and Statistics,Wuhan University, Wuhan-430072, China}
\email{samyaray7777@gmail.com}

%

\pagestyle{headings}

\begin{abstract} In this article, we study multivariate generalizations of Matsaev's conjecture in commutative and non-commutative $L^p$-spaces. We prove that the multivariate analogue of Matsaev's conjecture is eventually false for all $1<p<\infty.$ We exhibit various joint dilation results on non-commutative $L^p$-spaces.
\end{abstract}

\maketitle

\section{Introduction and Main Results}
In 1950, von Neumann \cite{vN50} proved that if $T$ is a contraction on a Hilbert space and $P$ is any complex polynomial in a single variable, then
\begin{equation*}\label{oo}
\|P(T)\|\leq\|P\|_{\infty,\mathbb{D}},
\end{equation*}where for any complex polynomial $P$ in $n$-variables, and any $K\subseteq \mathbb{C}^n,$ we define  \[\|P\|_{\infty,K}\colon=\sup\{|P(z_1,\dots,z_n)|\colon(z_1,\dots,z_n)\in K\}.\]
The von Neumann inequality has turned out to be one of the most important operator theoretic tools (see \cite{SzF70}). The popular way of proving the von Neumann inequality is through the dilation theory, which ensures that any contraction on a Hilbert space always admits a unitary dilation (see \cite{PA02} Chapter 3 and \cite{SzF70} Chapter 1). Ando's dilation theorem \cite{AN63} readily implies that the von Neumann inequality holds true for two commuting contractions. Unfortunately, the von Neumann inequality fails to be true in general. In 1974, Varopoulos \cite{VA74} used sophisticated probabilistic tools combined with techniques from the metric theory of tensor product to show that the von Neumann inequality does not generalize to arbitrary number of commuting contractions. In the end of the same paper, together with Kaijser (\cite{VA74}), he constructed an explicit example of three commuting contractions $T_1$, $T_2$, $T_3$ on a five dimensional complex Hilbert space and a polynomial $P$ of degree two such that
\begin{equation*}
\|P(T_1,T_2,T_3)\|>\|P\|_{\infty,\mathbb{D}^3}.
\end{equation*}
For more on the von Neumann inequality and counterexamples, we recommend the readers \cite{BAB13}, \cite{SzF70}, \cite{GUR18}, \cite{CRD75}, \cite{KN16},  \cite{DR83}), \cite{TO78} and \cite{HO01}.

In 1966, V. I. Matsaev (see \cite{NI74}) proposed following possible generalization of the von Neumann inequality on $L^p$-spaces, for $1\leq p\leq\infty,$ which is, given any contraction $T$ on a $\sigma$-finite $L^p$-space, and any polynomial $P$ in a single variable
\begin{equation*}
\|P(T)\|\leq\|P(R)\|_{\ell^p({\mathbb{N}})\to\ell^p({\mathbb{N}})},
\end{equation*}
where $R$ is the right shift operator on $\ell^p({\mathbb{N}})$, defined as $R(x_1,x_2,x_3,\dots)\colon=(0,x_1,x_2,x_3,\dots).$

The above mentioned proposition is renowned to be Matsaev's conjecture (see also \cite{PI01}, Page no. 30). Due to existence of isometric dilation, the conjecture remains true for contractions which admit a contractive positive majorant. However, Drury \cite{DR11} exhibited an explicit counterexample for $p=4$. We refer the reader \cite{AKS77}, \cite{COW76}, \cite{CORW77}, \cite{PE76}--\cite{PE85} for more information in this direction. Also, we recommend \cite{AR13} for generalizations in the setting of non-commutative $L^p$-spaces and \cite{FE97} for semigroups. We also recommend \cite{LE99}, \cite{FA15} for another formulation of Matsaev's conjecture in connection with analytic semigroup and functional calculus and \cite{ARL14}, \cite{AR16}, \cite{ARK17} \cite{ARFL17}, \cite{AR19}, \cite{AR191} for many results associated to Matsaev's conjecture and dilations on commutative and non-commutative $L^p$-spaces.

In this paper, we consider Matsaev's conjecture in various multivariate settings. We need the following notations.

Suppose $w=(w_{i_1,\ldots,i_n})_{i_j\in\mathbb{N}}$ is a sequence of positive real numbers and $X$ is a Banach space. Let us define
\[\ell^p(w,X)\colon=\big\{\big(x_{i_1,\ldots,i_n}\big)_{i_j\in\mathbb{N}}\colon x_{i_1,\ldots,i_n}\in X,\sum_{i_j\in\mathbb{N}}\big\|x_{i_1,\ldots, i_n}\big\|_{X}^pw_{i_1,\ldots,i_n}<\infty\big\},\]
with norm defined as $\big\|\big(x_{i_1,\ldots,i_n}\big)\big\|_{\ell^p(\omega,X)}\colon=\big(\sum_{i_j\in\mathbb{N}}\big\|x_{i_1,\ldots,i_n}\big\|_{X}^pw_{i_1,\ldots,i_n}\big)^{\frac{1}{p}}.$

If $w_{i_1,\ldots,i_n}=1$ for all ${i_j\in\mathbb{N},1\leq j\leq n}$, we denote the Banach space by $\ell^{p}({\mathbb{N}^n},X)$. We denote the Banach spaces $\ell^p(\omega,\mathbb{C})$ and $\ell^p({\mathbb{N}^n},\mathbb{C})$ by $\ell^p(\omega)$ and  $\ell^p(\mathbb{N}^n)$ respectively. 

\begin{defn}[Right shift operators on $\ell^p(w,X)$]\label{SH}
For $1\leq j\leq n$, the $j$-th right shift operator on $\ell^{p}(w,X)$ is denoted as $R_j$ and  is defined by

             \[
              (R_j(x))_{i_1,\dots,i_j,\dots,i_n}\colon=
              \begin{cases}
              0,& i_j=1\\
               x_{i_1,\dots,i_j-1,\dots,i_n},& i_j>1,
              \end{cases}
              \]
where $x=\big(x_{i_1,\ldots,i_n}\big)_{i_j\in\mathbb{N}}\in\ell^p(w,X).$
\end{defn}
\begin{defn}[Left shift operators on $\ell^p(w,X)$]\label{RH}
 For $1\leq j\leq n$, the $j$-th left shift operator on $\ell^{p}(w,X)$ is denoted as $S_j$ and  is defined by \[(S_j(x))_{i_1,\dots,i_j,\dots,i_n}\colon=x_{i_1,\dots,i_j+1,\dots,i_n}\] where $x=\big(x_{i_1,\ldots,i_n}\big)_{i_j\in\mathbb{N}}\in\ell^p(w,X).$
\end{defn}
For any commuting $n$-tuple of operators $\mathbf{T}=(T_1,\dots,T_n)$ on a Banach space $X$, let us denote $P(\mathbf{T})\colon=P(T_1,\dots,T_n),$ where $P$ is a polynomial in $n$-variables. Unless specified, we shall always work with $\sigma$-finite measure spaces and consider $1<p<\infty.$
\begin{defn}[$n$-Matsaev property]\label{MM}
	Let $\mathbf{T}=(T_1,\dots,T_n)$ be a commuting tuple of contractions on $L^p(\Omega,\mathbb{F},\mu)$. We say that the tuple $\mathbf{T}$ has $n$-Matsaev property if
	for all complex polynomials $P$ in $n$-variables,
	$$\|P(\mathbf{T})\|_{L^p(\Omega,\mathbb{F},\mu)\to L^p(\Omega,\mathbb{F},\mu)}\leq \|P(\mathcal{R})\|_{\ell^{p}(\mathbb{N}^n)\to \ell^{p}(\mathbb{N}^n)},$$
	
	where $\mathcal{R}=(R_1,\dots,R_n)$ is the $n$-tuple of right shift operators on $\ell^{p}(\mathbb{N}^n)$ as in \Cref{SH}.
\end{defn}
One can easily verify that in \Cref{MM}, $\ell^p(\mathbb N^n)$ and right shift operators on $\ell^p(\mathbb N^n)$ can be replaced by $\ell^p(\mathbb Z^n)$ and right shift operators on $\ell^p(\mathbb Z^n)$ respectively.

From well-known transference principle due to Coifman-Weiss \cite{COW76}, one can easily show that any commuting $n$-tuple of onto isometries has $n$-Matsaev property. In Section \eqref{COM}, we show that any $n$-tuple of commuting isometries (not necessarily onto) acting on $L^p$-space, has $n$-Matsaev property. However, adapting Peller's \cite{PE78} proof, we have following general theorem. 
\begin{thm}\label{T}
	Let $\mathbf{U}=(U_1,\dots,U_n)$ be a commuting tuple of isometries (not necessarily onto) on $L^p(\Omega,\mathbb{F},\mu)$ . Suppose $w=(w_{i_1\dots,i_n})$ is a sequence of positive real numbers indexed by $\mathbb{N}^n$ with the following properties:
	\begin{enumerate}\label{prope}
		\item[(1)]$w_{i_1,\dots,i_n}\geq1$ for all $i_j\in\mathbb{N},1\leq j\leq n$.
		
		\item[(2)]For each $1\leq j\leq n$, $\sup\limits_{i_j,1\leq j\leq n}\frac {w_{i_1,\dots,i_j+1,\dots,i_n}}{w_{i_1,\dots,i_j,\dots,i_n}}\colon=M_j$ is finite.\label{po}
	\end{enumerate}
	Then for any polynomial $P$ in $n$-variables,
	$$\|P(\mathbf{U})\|_{L^p(\Omega,\mathbb{F},\mu)\to L^p(\Omega,\mathbb{F},\mu)}\leq \|P(\mathcal{R})\|_{\ell^{p}(w,\mathbb{N}^n)\to \ell^{p}(w,\mathbb{N}^n)},$$
	
	where $\mathcal{R}=(R_1,\dots,R_n)$ is the $n$-tuple of right shift operators on $\ell^{p}(w,\mathbb{N}^n),$ defined as in \eqref{SH}.
\end{thm}

In this section, we also prove that the multivariate analogue of Matsaev's conjecture fails. More precisely, we establish the following. For a fixed $1<p<2,$ let $\mathcal{C}_p(n)$ denote the set of all commuting $n$-tuple of contractions on some $L^p$-space. The set of all homogeneous polynomials in $n$-variables of degree at most $k$ is denoted by $\mathbb{C}_k[z_1,\dots,z_n].$ Let $C_p(k,n)$ denote the quantity \[C_p(k,n)\colon=\sup\{\|P(\mathbf{T})\|_{L^p\to L^p}\colon \mathbf{T}\in \mathcal{C}_p(n),\ \|P(\mathcal{R})\|_{\ell^p(\mathbb{N}^n)\to\ell^p(\mathbb{N}^n)}\leq 1,\ P\in\mathbb{C}_k[z_1,\dots,z_n]\}\] and $C_p(n)\colon=\sup\limits_{k\in\mathbb N}C_p(k,n).$ Our main theorem is the following. Let $G$ denote the standard complex Gaussian random variable. The expectation of any random variable is denoted by $\mathbb E.$
	\begin{thm}\label{F} Let $1<p<2$ and $n\geq k\geq 3$. Then, there exists a constant $D\leq 8,$ such that
		\begin{align*}	C_p(k,n)\geq D^{-\frac{2}{p^\prime}}(1-o(1))\Big(\mathbb{E}|G|^{p^\prime}\Big)^{-\frac{k}{p^\prime}}n^{\frac{k-2}{p^\prime}}.
		\end{align*}
	\end{thm}Clearly, for $k\geq 3,$ $\lim_{n\to\infty}C_{p}(k,n)=\infty.$ Our approach goes back to \cite{VA74}, \cite{DI76} and the recent paper \cite{GAMS15}, where the authors used combinatorial and probabilistic techniques to study the asymptotic properties of constant $C_2(k,n).$ For $k=2,$ we prove the following theorem.
\begin{thm}\label{2decree} Let $1<p<2$ be such that $\frac{1}{2}9^{\frac{1}{p^\prime}}\Big(\mathbb{E}|G|^{p^\prime})^{\frac{1}{p^\prime}}\Big)^{-2}>1.$ Then, we have the estimate $\lim\limits_{k\to\infty}C_p(2,k(k-1))>1.$ 
	\end{thm} 
 In this case, we use some explicit examples which were constructed in \cite{FIR94} for studying Bell inequalities. We combine this examples with some of the techniques developed in \cite{GUR18} to obtain the required result which was used by the authors to solve a question of Varopoulos \cite{VA76}.

In Section \eqref{NCOM}, we discuss generalizations in the case of non-commutative $L^p$-spaces and semigroups. To state our results, we recall the basics of non-commutative $L^p$-spaces. For more on non-commutative $L^p$ spaces, we refer \cite{PIQ03}. Let $\mathcal M$ be a von Neumann algebra with normal, semifinite, faithful trace $\tau.$ Unless specified, we always work with von Neumann algebras of this type. Let us denote $\mathcal M_{+}$ to be the set of all positive elements in $\mathcal M.$ For any $x\in\mathcal M_{+},$ we denote the support projection of $x$ by $s(x).$ We denote $S(\mathcal{M})$ to be the linear span of set of all positive elements in $\mathcal M$ such that $\tau(s(x))<\infty$. For $1\leq p<\infty,$ we define the non-commutative $L^p$-space $L^p(\mathcal{M})$ to be the completion of $S({\mathcal{M}})$ with respect to the norm $\|x\|_{L^p(\mathcal{M})}\colon=\tau(|x|^p)^{\frac{1}{p}}$ where $|x|\colon=(x^*x)^{\frac{1}{2}}.$ One sets $L^{\infty}(\mathcal{M})=\mathcal{M}$. In particular for any Hilbert space $\mathcal{H}$ if we have $\mathcal{M}=B(\mathcal{H})$ and $\tau$ the usual trace denoted by $Tr$, then the corresponding non-commutative $L^p$-spaces are known to be Schatten-$p$ classes and denoted by $S^p(\mathcal{H})$, $1\leq p<\infty.$ Let $I$ be an indexing set. Then for $\mathcal{H}=\ell_I^2,$ the corresponding Schatten classes are denoted by $S^p_I$ for $1\leq p<\infty.$ We simply write $S^p_{\mathbb N}=S^p.$ For any two Hilbert spaces $\mathcal H_1$ and $\mathcal H_2,$ we denote $\mathcal H_1\otimes_2\mathcal H_2$ to be the usual Hilbert space tensor product. For any Banach space $X$ and $P\in\mathbb{C}[z_1,\dots,z_n],$ let us define
$\|P\|_{p,X}\colon=\|P(\mathcal{S})\otimes I_X\|_{\ell^p(\mathbb{N}^n,X)},$ where $\mathcal{S}$ is the commuting $n$-tuple of left shift operators on $\ell^p(\mathbb{N}^n).$
\begin{defn}[non-commutative $n$-Matsaev property]\label{NCMAT}Suppose $1<p<\infty$, $p\not=2$. A commuting $n$-tuple of contractions $\mathbf{T}=(T_1,\dots,T_n)$ on $L^p(\mathcal M)$ is
said to have non-commutative $n$-Matsaev property if 
$$\|P(\mathbf{T})\|_{L^p(\mathcal{M})\to L^p(\mathcal{M})}\leq\|P\|_{p,S^p}$$
for all polynomials $P\in\mathbb{C}[z_1,\dots,z_n].$
\end{defn}
 It is known that Akcoglu's dilation theorem does not have a non-commutative analogue \cite{JUL07}. This is a striking difference in the non-commutative universe. Also, till now it is not known if a multivariate analogue of Akcoglu's dilation theorem is true in the commutative setting. However, in \cite{AR13}, the author obtained a dilation theorem for a large class of unital completely positive Schur multipliers and Fourier multipliers on discrete group. In this paper, we show that a joint dilation theorem still holds for these class of operators. To state our results, we recall the definition of Schur multipliers. Let $M_I$ denote set of all matrices indexed by $I\times I.$
 \begin{defn} Let $1\leq p<\infty.$ A matrix $A\in M_I$ is called a Schur multiplier on $S^p_I$ if and only if the linear map $M_A\colon\text{Dom}(M_A)\subseteq B(\ell_I^2)\to B(\ell_I^2)$ defined as $M_A((b_{i,j})_{i,j\in I})\colon=(a_{i,j}b_{i,j})_{i,j\in I}$ extends to a bounded linear operator from
 $S_p^I$ to $S_p^I,$ where $1\leq p<\infty.$ We say that $A$ is a Schur multiplier on $B(\ell_I^2)$ if $M_A$ extends to a bounded operator from $B(\ell^2_I)$ to $B(\ell^2_I)$.
 \end{defn} We refer \cite{PA02} for more on Schur multipliers and the notion of completely positive maps. We prove following joint dilation theorem.
 \begin{thm}\label{NCD}
 	Let $M_{A_i}$ be unital completely positive Schur multipliers on $B(\ell_I^2)$ associated with real matrices $A_i$, $1\leq i\leq n.$ Then there exists a hyperfinite von Neumann algebra $\mathcal{M}$ equipped with a semifinite normal
 	faithful trace, a commuting tuple $(U_1,\dots,U_n)$ of unital trace preserving $*$-automorphisms on $\mathcal{M}$, a unital trace preserving one-to-one normal $*$-homomorphism $J\colon B(\ell_I^2)\to\mathcal{M}$ such that \[M_{A_1}^{k_1}\dots M_{A_n}^{k_n}=EU_1^{k_1}\dots U_n^{k_n}J,\] for all integers $k_i\geq 0,\ 1\leq i\leq n,$ where $E\colon\mathcal{M}\to B(\ell_I^2)$ is the canonical normal faithful trace preserving conditional expectation operator associated with $J.$
 \end{thm}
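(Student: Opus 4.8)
The plan is to build the dilation by hand inside a \emph{fermionic} $q$-Gaussian algebra, i.e. with $q=-1$. The crucial structural feature of this choice is that for a real unit vector $e$ the field operator $w(e)=l(e)+l(e)^{*}$ is a \emph{self-adjoint unitary} of $\Gamma_{-1}(H)$ (because $l(e)^{2}=0$ and $l(e)^{*}l(e)+l(e)l(e)^{*}=\langle e,e\rangle=1$), and that the trace of an even product of field operators is given by the fermionic Wick formula
\[
\tau_{-1}\big(w(e_{1})\cdots w(e_{2m})\big)=\sum_{\pi\in\mathcal{P}_{2}(2m)}(-1)^{\mathrm{cr}(\pi)}\prod_{\{a,b\}\in\pi}\langle e_{a},e_{b}\rangle ,
\]
the sum running over pair partitions $\pi$ of $\{1,\dots,2m\}$ with $\mathrm{cr}(\pi)$ the number of crossings, while the trace of an odd product vanishes. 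Unitality of the Schur multipliers is exactly what forces the vectors below to have norm one, which is what makes the field operators unitary.

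\textbf{Construction.} Using the stated factorisation of unital completely positive Schur multipliers through real unit vectors, write $a_{ij}=\langle\alpha_{i},\alpha_{j}\rangle_{H_{A}}$ and $b_{ij}=\langle\beta_{i},\beta_{j}\rangle_{H_{B}}$ for families $(\alpha_{i})_{i\in I}$, $(\beta_{i})_{i\in I}$ of real unit vectors. Put $\mathsf{M}_{A}=\Gamma_{-1}(\ell^{2}(\Z)\otimes H_{A})$ and $\mathsf{M}_{B}=\Gamma_{-1}(\ell^{2}(\Z)\otimes H_{B})$, with canonical traces $\tau_{A},\tau_{B}$; these are hyperfinite, and so is
$\mathsf{M}:=B(\ell_{I}^{2})\,\bar{\otimes}\,\mathsf{M}_{A}\,\bar{\otimes}\,\mathsf{M}_{B}$
with the normal, semifinite, faithful trace $\tau:=Tr\otimes\tau_{A}\otimes\tau_{B}$. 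Let $S$ be the bilateral shift on $\ell^{2}(\Z)$, $\delta_{0}$ the basis vector at the origin, and set $\theta_{A}=\Gamma_{-1}(S\otimes\mathrm{id}_{H_{A}})$, $\theta_{B}=\Gamma_{-1}(S\otimes\mathrm{id}_{H_{B}})$, which are trace-preserving $*$-automorphisms of $\mathsf{M}_{A}$, $\mathsf{M}_{B}$. Write $\alpha_{i}'=w(\delta_{0}\otimes\alpha_{i})\in\mathsf{M}_{A}$ and $\beta_{i}'=w(\delta_{0}\otimes\beta_{i})\in\mathsf{M}_{B}$ (self-adjoint unitaries), and form the self-adjoint unitaries $V_{A}=\sum_{i}e_{ii}\otimes\alpha_{i}'\otimes 1$ and $V_{B}=\sum_{i}e_{ii}\otimes 1\otimes\beta_{i}'$ in $\mathsf{M}$. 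Finally define
\[
U_{1}=\mathrm{Ad}(V_{A})\circ(\mathrm{id}\otimes\theta_{A}\otimes\mathrm{id}),\qquad U_{2}=\mathrm{Ad}(V_{B})\circ(\mathrm{id}\otimes\mathrm{id}\otimes\theta_{B}),
\]
both unital trace-preserving $*$-automorphisms of $\mathsf{M}$, together with the unital trace-preserving one-to-one normal $*$-homomorphism $J\colon B(\ell_{I}^{2})\to\mathsf{M}$, $J(x)=x\otimes 1\otimes 1$, and its associated canonical conditional expectation $E=\mathrm{id}\otimes\tau_{A}\otimes\tau_{B}\colon\mathsf{M}\to B(\ell_{I}^{2})$.

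\textbf{Verification.} First, $U_{1}$ and $U_{2}$ commute: $V_{A}$ and $V_{B}$ are simultaneously diagonal in the $B(\ell_{I}^{2})$-index, hence commute, $\theta_{A}$ acts only on the $\mathsf{M}_{A}$-leg and $\theta_{B}$ only on the $\mathsf{M}_{B}$-leg, and neither automorphism moves the distinguished unitary living on the other's leg; so the four elementary factors pairwise commute. Since all the maps involved are normal and the span of the matrix units $e_{ij}$ is weak-$*$ dense in $B(\ell_{I}^{2})$, it suffices to check $M_{A}^{k}M_{B}^{l}(e_{ij})=EU_{1}^{k}U_{2}^{l}J(e_{ij})$. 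Unwinding the definitions (and using $\theta_{A}(1)=\theta_{B}(1)=1$, $\mathrm{Ad}(V_{A})(e_{ij}\otimes y\otimes z)=e_{ij}\otimes\alpha_{i}'y\alpha_{j}'\otimes z$, etc.) one obtains
$U_{1}^{k}U_{2}^{l}(e_{ij}\otimes 1\otimes 1)=e_{ij}\otimes\big(\alpha_{i}'\theta_{A}(\alpha_{i}')\cdots\theta_{A}^{k-1}(\alpha_{i}')\big)\big(\theta_{A}^{k-1}(\alpha_{j}')\cdots\theta_{A}(\alpha_{j}')\alpha_{j}'\big)\otimes\big(\beta_{i}'\theta_{B}(\beta_{i}')\cdots\theta_{B}^{l-1}(\beta_{i}')\big)\big(\theta_{B}^{l-1}(\beta_{j}')\cdots\beta_{j}'\big)$,
where $\theta_{A}^{m}(\alpha_{i}')=w(\delta_{m}\otimes\alpha_{i})$ and similarly for $\beta'$. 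Applying $E$ and using that $\tau$ factors over the legs reduces everything to the scalar identities
\[
\tau_{A}\big(w(\delta_{0}\otimes\alpha_{i})\,w(\delta_{1}\otimes\alpha_{i})\cdots w(\delta_{k-1}\otimes\alpha_{i})\,w(\delta_{k-1}\otimes\alpha_{j})\cdots w(\delta_{1}\otimes\alpha_{j})\,w(\delta_{0}\otimes\alpha_{j})\big)=a_{ij}^{\,k},
\]
and the analogue for $b_{ij}^{\,l}$. In the fermionic Wick expansion of this $2k$-fold product, two field operators can pair only when they carry the same $\ell^{2}(\Z)$-coordinate; since the $k$ operators attached to $\alpha_{i}$ carry the distinct coordinates $\delta_{0},\dots,\delta_{k-1}$, and likewise those attached to $\alpha_{j}$, the only admissible pairing is the ``rainbow'' one matching $\delta_{m}\otimes\alpha_{i}$ with $\delta_{m}\otimes\alpha_{j}$ for each $m$. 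This pairing is fully nested, hence crossing-free, so it contributes with sign $+1$ and value $\prod_{m}\langle\alpha_{i},\alpha_{j}\rangle=a_{ij}^{\,k}$ (the case $k=0$ being $\tau_{A}(1)=1$). Hence $EU_{1}^{k}U_{2}^{l}J(e_{ij})=a_{ij}^{\,k}b_{ij}^{\,l}\,e_{ij}=M_{A}^{k}M_{B}^{l}(e_{ij})$, as required.

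\textbf{Main obstacle.} The delicate point is precisely this last moment computation: one must see that the shift automorphisms $\theta_{A},\theta_{B}$ genuinely ``spread out'' the field operators over distinct coordinates of $\ell^{2}(\Z)$, so that the fermionic Wick sum collapses to the single nested pairing, and one must get its sign right. The choice $q=-1$ is essential here and not merely convenient — it is for $q=-1$ that $w(e)$ is unitary (so that $\mathrm{Ad}(V_{A})$, $\mathrm{Ad}(V_{B})$ are honest $*$-automorphisms) and that the surviving pairing has a sign one can control; for a generic $q$ the field operators fail to be unitary and the moment is governed by a $q$-weighted sum that does not reduce to $a_{ij}^{\,k}$. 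The remaining verifications are routine but should be recorded: that $\mathsf{M}$ is hyperfinite (a tensor product of $B(\ell^{2})$ with CAR algebras), that $J$, $E$, $U_{1}$, $U_{2}$ have the asserted structural properties, and that normality together with weak-$*$ totality of the $e_{ij}$'s upgrades agreement on matrix units to equality of the maps on all of $B(\ell_{I}^{2})$.
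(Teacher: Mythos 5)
Your construction is correct, and it is the same dilation scheme the paper uses (the Ricard-type fermionic Markov dilation of Schur multipliers): a hyperfinite algebra of the form $B(\ell_I^2)$ tensored with $q=-1$ Gaussian algebras, automorphisms of the form ``shift, then conjugate by a diagonal self-adjoint unitary built from field operators,'' the embedding $J(x)=x\otimes 1$, and the canonical trace-preserving expectation. The difference is in how the ``spreading'' is implemented and hence how the moment $a_{ij}^k$ is extracted. The paper works in an infinite tensor product $\overline{\otimes}_{\mathbb{Z}}\big(\Gamma_{-1}(\mathcal{H}_A)\overline{\otimes}\Gamma_{-1}(\mathcal{H}_B)\big)$ with the tensor-coordinate shift, so after $k$ steps the relevant word is a tensor product of $k$ independent two-letter words $w_A(e_i)w_A(e_j)$ sitting in distinct slots; the trace then factors and each slot contributes $\tau_A\big(w_A(e_i)w_A(e_j)\big)=a_{ij}$, with no pairing combinatorics at all. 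You instead work in a single Fock space $\Gamma_{-1}(\ell^2(\mathbb{Z})\otimes H_A)$ with the second-quantized bilateral shift, which forces you to invoke the fermionic Wick formula and argue that orthogonality of the $\delta_m$'s kills every pairing except the non-crossing rainbow one, whose sign is $+1$. Both computations are correct; the paper's tensor-product version trades your Wick-formula argument for a slightly more cumbersome algebra (bookkeeping of the slots $(W^A_{ij}\otimes W^B_{ij})^{\otimes l}\otimes (W^A_{ij})^{\otimes(k-l)}$ in the induction), while your version is more compact but hinges on the crossing count, which you correctly identify as the delicate point. The remaining structural verifications (unitarity of the field operators from unitality of $M_A$, $M_B$; commutation of $U_1$ and $U_2$; trace preservation; normality and weak-$*$ totality of the matrix units) match the paper's and are handled adequately.
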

Let $1<p<\infty$. Then, the commuting tuple of unital completely positive Schur multipliers $(M_{A_1},\dots,M_{A_n})$ on $B(\ell_I^2)$ as described in \Cref{NCD} extends to a commuting tuple of contractive Schur multipliers $(M_{A_1},\dots,M_{A_n})$ on $S^p_I.$ We refer \cite[Section 4]{AR13} for detailed explanations.

Let $G$ be a discrete group. Let $g\mapsto\lambda(g)\colon\ell_G^2\to \ell_G^2$ be the left regular representation of $G$ and denote $VN(G)$ to be the group von Neumann algebra.
\begin{defn}[Fourier multiplier on $VN(G)$]A Fourier multiplier of $VN(G)$ is a normal linear map $M\colon VN(G)\to VN(G)$ such that there exists a function $t\colon G\to\mathbb{C}$ for which $M(\lambda(g))=t_g\lambda(g)$ for all $g\in G.$
\end{defn}
We denote the Fourier multiplier by $M_t.$ The group von Neumann algebra $VN(G)$ admits a normal semifinite faithful trace defined as $\tau(x)=\langle \epsilon_{e_G},x\epsilon_{e_G}\rangle_{\ell^2_G},$ where $(\epsilon_g)_{g\in G}$ is canonical basis of $\ell^2_G$ and $e_G$ denotes the identity element of $G.$
We prove the following joint dilation theorem for Fourier multipliers.
\begin{thm}\label{GVNA}Let $G$ be a discrete group and $M_{t_i}$ be unital completely positive Fourier multipliers on $VN(G)$, where $t_i\colon G\to\mathbb{C}$ are real functions for $1\leq i\leq n$. Then there exists a von Neumann algebra $\mathcal{M}$ equipped with a normal semifinite faithful trace, a commuting tuple of unital trace preserving $*$-automorphisms $(U_1,\dots,U_n)$ on $\mathcal{M}$ and a unital normal trace preserving one-one $*$-homomorphism $J\colon VN(G)\to\mathcal{M}$ such that $$M_{t_1}^{k_1}\dots M_{t_n}^{k_n}=EU_1^{k_1}\dots U_n^{k_n}J,$$ for all integers $k_i\geq 0,$ $1\leq i\leq n,$ where $E\colon \mathcal{M}\to VN(G)$ is the canonical faithful normal trace preserving conditional expectation operator associated with $J.$
\end{thm}
Let $1<p<\infty.$ Then, the commuting tuple of  unital completely positive Fourier multipliers $(M_{t_1},M_{t_2},\dots,M_{t_n})$ on $VN(G)$ described as in \Cref{GVNA}, extends to a commuting tuple of contractive Fourier multipliers $(M_{t_1},M_{t_2},\dots,M_{t_n})$ on $L^p(VN(G)).$ We refer \cite[Section 4]{AR13} for details.

By using \Cref{NCD} and \Cref{GVNA}, we have the following corollary.
\begin{cor}\label{coro}
	Let $1<p\neq 2<\infty.$ Then the following classes of commuting tuple of contractions satisfy non-commutative $n$-Matsaev property.
	\begin{itemize}
		\item[(i)] A commuting tuple of contractive Schur multipliers  $(M_{A_1},\dots,M_{A_n})$ on $S^p_I$ induced by a commuting tuple of unital completely positive Schur multipliers $(M_{A_1},\dots,M_{A_n})$ on $B(\ell^2_I),$ where $M_{A_i}:B(\ell^2_I)\to B(\ell^2_I)$ being associated with a real-valued matrix $A_i$ for $1\leq i\leq n.$
		\item[(ii)] A commuting tuple of contractive Fourier multipliers $(M_{t_1},M_{t_2},\dots,M_{t_n})$ on $L^p(VN(G))$ induced by a commuting tuple of unital completely positive Fourier multipliers $(M_{t_1},M_{t_2},\dots,M_{t_n})$ on $VN(G)$ where $M_{t_i}:VN(G)\to VN(G)$ being associated with a real-valued function $t_i:G\to\mathbb C$ and $G$ is a discrete amenable group or $G=\mathbb F_k$, i.e. free group with $k$-generators.
		\end{itemize}
\end{cor}
We skip the proof of above corollary which is along the same line as in \cite{AR13}.  
 We also establish a joint dilation theorem for multi-parameter semigroup of completely positive Schur multipliers, extending the dilation theorem  in \cite{AR13}, \cite{AR19} and \cite{AR191} in multivariate setting.

\section{$n$-Matsaev Property on commutative $L^p$ spaces}\label{COM} In this section, we generalize Matsaev's conjecture 
 in the multivariate setting and show that it fails to be true for some commuting tuple of contractions. We also show that the conjecture holds to be true for commuting tuple of isometries.

We recall notion of Fourier multipliers for locally compact abelian groups.
\begin{defn}[Fourier multipliers]\label{FM}
Let $G$ be a locally compact abelian group with its dual group $\widehat{G}$. For $1\leq p\leq\infty,$ a bounded operator $T\colon L^p(G)\to L^p(G)$ is called a Fourier multiplier, if there exists a $\phi\in L^\infty(\widehat{G}),$ such that  we have $\widehat{Tf}=\phi\widehat{f},$ for any $f\in L^2(G)\cap L^p(G).$
\end{defn}
We denote such an operator $T$ by $M_{\phi}$. We represent the set of all Fourier multipliers on $G$ by $M_p(G)$. For more on multiplier theory on locally compact abelian groups, we recommend the reader \cite{LA71}.

Let $1<p<\infty.$ Let $P\in\mathbb{C}[z_1,\dots,z_n]$ be a polynomial in $n$-variables.
Then, by the properties of multipliers and easy calculations, it would imply \[\|P(\mathcal{R})\|_{\ell^{p}(\mathbb{N}^n)\to \ell^{p}(\mathbb{N}^n)}=\|P(\mathcal{S})\|_{\ell^{p}(\mathbb{N}^n)\to \ell^{p}(\mathbb{N}^n)}=\|P\|_{M_p(\mathbb{Z}^n)}\] and \[\|P(\mathcal{R})\|_{\ell^{p^\prime}(\mathbb{N}^n)\to \ell^{p^\prime}(\mathbb{N}^n)}=\|P(\mathcal{S})\|_{\ell^{p^\prime}(\mathbb{N}^n)\to \ell^{p^\prime}(\mathbb{N}^n)}=\|P\|_{M_{p^\prime}(\mathbb{Z}^n)}=\|P\|_{M_p(\mathbb{Z}^n)},\] where $\frac{1}{p}+\frac{1}{p^\prime}=1.$

{\bf{Proof of Theorem \ref{T}:}}

\begin{proof} Let $R_j$ be the $j$-th right shift operator on $\ell^{p^\prime}(w,L^{p^\prime}(\Omega,\mathbb{F},\mu))$, $1\leq j\leq n.$ Suppose $F=(f_{i_1,\ldots,i_n})$ is an element of $\ell^{p^\prime}(w,L^{p^\prime}(\Omega,\mathbb{F},\mu)).$ By the properties of the weight $w$ (see  Part (2) of \ref{prope}), we have the following observation

\begin{eqnarray*}
\|R_jF\|^{p^\prime}_{\ell^{p^\prime}(w,L^{p^\prime}(\Omega,\mathbb{F},\mu))}
\leq M_j\|F\|^{p^\prime}_{\ell^{p^\prime}(w,L^{p^\prime}(\Omega,\mathbb{F},\mu))}.
\end{eqnarray*}
Thus $R_j$ becomes a bounded operator for each $j,\ 1\leq j\leq n$. We observe that as Banach spaces $(\ell^{p^\prime}(w,L^{p^\prime}(\Omega,\mathbb{F},\mu)))^*\cong \ell^p(v,L^p(\Omega,\mathbb{F},\mu))$, where $v=(v_{i_1,\dots,i_n})$ is a sequence of positive real numbers indexed by $\mathbb{N}^n$ and for each $i_j\in\mathbb{N},1\leq j\leq n,$ $v_{i_1,\dots,i_n}={w_{i_1,\dots,i_n}}^{-\frac{p}{p^\prime}},$ where $\frac{1}{p}+\frac{1}{p^\prime}=1.$ We have the duality relationship between the Banach spaces, which is given by $\langle F,G\rangle\colon=\sum_{i_j}\int_\Omega f_{i_1,\dots,i_n}g_{i_1,\dots,i_n},$ where $F=(f_{i_1,\dots,i_n})$ is an element of $\ell^p(v,L^p(\Omega,\mathbb{F},\mu))$ and $G=(g_{i_1,\dots,i_n})$ is in $\ell^{p^\prime}(w,L^{p^\prime}(\Omega,\mathbb{F},\mu))$. Thereafter for each $j,$ $1\leq j\leq n$, let us consider the adjoint operator \[R_j^*\colon\ell^p(v,L^p(\Omega,\mathbb{F},\mu))\to \ell^p(v,L^p(\Omega,\mathbb{F},\mu)).\] It is not hard to check that 
\begin{equation}\label{o1}
R_j^*(F)=(f_{i_1},\dots,f_{i_j+1},\dots,f_{i_n}),
\end{equation}
 where $F=(f_{i_1,\dots,i_n})$ is in $\ell^p(v,L^p(\Omega,\mathbb{F},\mu))$. We choose a positive real number $\lambda$ in $(0,1)$ and define $V_i=\lambda U_i,$ $1\leq i\leq n$. Now if we consider the operator $W\colon L^p(\Omega,\mathbb{F},\mu)\to \ell^p(v,L^p(\Omega,\mathbb{F},\mu)),$ 
 \begin{equation}\label{o2}
 Wf=(V_1^{i_1-1}\cdots V_n^{i_n-1}f),
 \end{equation} then one can see that
\begin{equation}\label{kkb}
\|Wf\|^p_{\ell^p(v,L^p(\Omega,\mathbb{F},\mu))}
=M\|f\|^p_{L^p(\Omega,\mathbb{F},\mu)}
\end{equation}
where $M\colon=\sum_{i_j}{w_{i_1,\dots,i_n}}^{-\frac{p^\prime}{p}}{\lambda}^{p((i_1+\cdots+i_n)-n)}.$ Since we have $w_{i_1,\dots,i_n}\geq1$ (see Part (1) of \ref{prope}) for all $i_j\in\mathbb{N},1\leq j\leq n$ and $\lambda$ is in $(0,1),$ the quantity $M$ is finite. We notice that, for any tuple $(m_1,\dots,m_n)$ of non-negative integers, the identity \begin{equation}\label{o3}
{R_1^*}^{m_1}\cdots{R_n^*}^{m_n}Wf=(V_1^{i_1+m_1-1}\cdots V_1^{i_n+m_n-1}f)
\end{equation} holds by using equation \ref{o1}. Also one can observe that 
\begin{equation}\label{o4}
WV_1^{m_1}\cdots V_n^{m_n}f=(V_1^{i_1+m_1-1}\cdots V_1^{i_n+m_n-1}f).
\end{equation} It is worth noticing that here, we actually use the commuting properties of the operators $U_i$'s. Therefore, one obtains by equations \ref{o3} and \ref{o4} that
\[{R_1^*}^{m_1}\cdots {R_n^*}^{m_n}Wf=WV_1^{m_1}\cdots V_n^{m_n}f.\] We readily see that for any polynomial $P$ in $n$-variables \[P(\mathcal{R^*})Wf=WP(\mathbf{V})f,\] where $\mathcal{R^*}\colon=({R_1}^*,\dots,{R_n}^*)$ and $\mathbf{V}\colon=(V_1,\dots,V_n).$ We make use of this to obtain the following inequality
\begin{equation}\label{b}
\|WP(\mathbf{V})f\|_{\ell^p(v,L^p(\Omega,\mathbb{F},\mu))}\leq \|P(\mathcal{R^*})\|_{\ell^p(v,L^p(\Omega,\mathbb{F},\mu))\to\ell^p(v,L^p(\Omega,\mathbb{F},\mu))}\|Wf\|_{\ell^p(v,L^p(\Omega,\mathbb{F},
	\mu))}.
\end{equation}
Henceforth, following Equations \ref{kkb} and \ref{b}, we observe that for any polynomial $P$ in $n$-variables, one has the inequality \[\|P(\mathbf{V})\|_{L^p(\Omega,\mathbb{F},\mu)\to L^p(\Omega,\mathbb{F},\mu)}\leq\|P(\mathcal{R^*})\|_{\ell^p(v,L^p
	(\Omega,\mathbb{F},\mu))\to \ell^p(v,L^p(\Omega,\mathbb{F},\mu))}.\] In above inequality, we take $\lambda$ arbitrarily close to one to obtain \begin{equation}\label{o5}\|P(\mathbf{U})\|_{L^p(\Omega,\mathbb{F},\mu)\to L^p(\Omega,\mathbb{F},\mu)}\leq\|P(\mathcal{R^*})\|_{\ell^p(v,L^p(\Omega,\mathbb{F},\mu))\to\ell^p(v,L^p(\Omega,\mathbb{F},\mu))}.
\end{equation} Next we show that $\|P(\mathcal{R^*})\|_{\ell^p(v,L^p(\Omega,\mathbb{F},\mu))\to\ell^p(v,L^p(\Omega,\mathbb{F},\mu))}=\|P(\mathcal{R})\|_{{\ell^{p}(w,\mathbb{N}^n)}\to{\ell^{p}(w,\mathbb{N}^n)}}.$ Hence we choose $F=(f_{i_1,\dots,i_n})$ in $\ell^{p^\prime}(w,L^{p^\prime}(\Omega,\mathbb{F},\mu))$ and $G=(g_{i_1,\dots,i_n}),$ where $P(\mathcal{R})F=G$. We have the following observation
\begin{eqnarray*}
\|P(\mathcal{R})F\|^{p^\prime}_{\ell^{p^\prime}(w,L^{p^\prime}(\Omega,\mathbb{F},\mu))}\leq\|P(\mathcal{R})\|^{p^\prime}_{{\ell^{p^\prime}(w,\mathbb{N}^n)}\to {\ell^{p^\prime}(w,\mathbb{N}^n)}}\|F\|^{p^\prime}_{\ell^{p^\prime}(w,L^{p^\prime}(\Omega,\mathbb{F},\mu))}.
\end{eqnarray*}

Again let us consider an element $h$ in $L^{p^\prime}(\Omega,\mathbb{F},\mu))$ with $\|h\|_{L^{p^\prime}(\Omega,\mathbb{F},\mu))}$ to be one. Suppose $C=(c_{i_1,\dots,i_n})$ is a sequence indexed by $\mathbb{N}^n$ and define $h_{i_1,\dots,i_n}=c_{i_1,\dots,i_n}h$. Hence if we define $H=(h_{i_1,\dots,i_n})$, we have $\|H\|_{\ell^{p^\prime}(w,L^{p^\prime}(\Omega,\mathbb{F},\mu))}=
\|C\|_{{\ell^{p^\prime}(w,\mathbb{N}^n)}}$ and $\|P(\mathcal{R})H\|^{p^\prime}_{\ell^{p^\prime}
	(w,L^{p^\prime}(\Omega,\mathbb{F},\mu))}=\|P(\mathcal{R})C\|^{p^\prime}_
{\ell^{p^\prime}(w,\mathbb{N}^n)}.$ Therefore we obtain the inequality \[\|P(\mathcal{R})\|^{p^\prime}_{\ell^{p^\prime}(w,L^{p^\prime}
	(\Omega,\mathbb{F},\mu))\to\ell^{p^\prime}(w,L^{p^\prime}(\Omega,\mathbb{F}
	,\mu))}\geq\|P(\mathcal{R})\|^{p^\prime}_{{\ell^{p^\prime}(w,\mathbb{N}^n)}\to{\ell^{p^\prime}(w,\mathbb{N}^n)}}.\]  Thus, we have from above  
\[\|P(\mathcal{R})\|_{\ell^{p^\prime}(w,L^{p^\prime}(\Omega,\mathbb{F},\mu))\to
	\ell^{p^\prime}(w,L^{p^\prime}(\Omega,\mathbb{F},\mu))}=\|P(\mathcal{R})\|_{{\ell^{p^\prime}(w,\mathbb{N}^n)}\to{\ell^{p^\prime}(w,\mathbb{N}^n)}}.\] Using  duality, one can immediately notice that \[\|P(\mathcal{R})\|_{{\ell^{p^\prime}(w,\mathbb{N}^n)}\to{\ell^{p^\prime}
		(w,\mathbb{N}^n)}}=\|P(\mathcal{R^*})\|_{{\ell^{p}(w,\mathbb{N}^n)}\to{\ell^{p}(w,\mathbb{N}^n)}}.$$ Also, it is not hard to observe that $$\|P(\mathcal{R})\|_{{\ell^{p}(w,\mathbb{N}^n)}\to{\ell^{p}(w,\mathbb{N}^n)}}=\|P(\mathcal{S})\|_{{\ell^{p}(w,\mathbb{N}^n)}\to{\ell^{p}(w,\mathbb{N}^n)}}.\]
In view of \ref{o5}, the proof of the theorem is completed.
\end{proof}

\begin{rem}From the above theorem, it trivially follows that any commuting $n$-tuple of isometries (not necessarily onto) has $n$-Matsaev property. 
\end{rem}

 Motivated by \cite{AR13}, we have the following generalization for general Banach spaces. The proof follows as in \cite{AR13} with necessary modifications as done in the previous theorem.
\begin{thm}\label{GMAT}Let $1\leq p\leq\infty$ and $X$ be a Banach space. Suppose $\mathbf{T}=(T_1,\dots,T_n)$ is a commuting tuple of isometries on $X.$ Then, for any polynomial $P$ in $n$-variables,
we have the following inequality
$\|P(\mathbf{T})\|_{X\to X}\leq\|P\|_{p,X}.$
\end{thm}

\subsection{Failure of the multivariate Matsaev's conjecture}\label{FAIL}In this subsection, we prove that the multivariate analogue of Matsaev's conjecture fails for all $1<p<\infty.$ Our construction uses explicit examples which were constructed in \cite{GAMS15}.  For any set $\mathbf{I},$ denote $\#\mathbf{I}$ to be the cardinality of $\mathbf{I}.$ We need the following lemma.

\begin{lem}\cite{PI89}\label{PIEM} Let $1<p<\infty.$ Then, for every $n\geq 1,$ there exists a subspace $E_n$ of $L^p([0,1])$ such that $E_n$ is isometrically isomorphic to $\ell^2_n$ and the projection map $\mathbb{P}_n\colon L^p([0,1])\to E_n\subseteq L^p([0,1])$ has the property that
\begin{eqnarray}\label{Se}
\|\mathbb{P}_n\|_{L^p([0,1])\to L^p([0,1])}\leq
\begin{cases}(\mathbb{E}|G|^p)^{\frac{1}{p}} ,& 2\leq p<\infty\\
        (\mathbb{E}|G|^{p^\prime})^{\frac{1}{p^\prime}},& 1<p<2.
         \end{cases}
\end{eqnarray}
\end{lem}
We would require the notion of partial Steiner system which were used in \cite{GAMS15} for constructing commuting tuple of contractions on Hilbert spaces. For more on partial Steiner systems and the properties of them, which we use here, we refer \cite{GAMS15} and references therein.
\begin{defn}
	 Let $n\in \mathbb N$ and $1\leq t\leq k\leq n.$ A partial
	Steiner system with the parameters $t$, $k$ and $n$ is a collection of subsets of $\{1,2,\dots,n\}$ of
	cardinality $k$ called as blocks which has the property that every subset of $\{1,2,\dots,n\}$ of
	size $t$ is contained in at most one block of the system. We denote a partial Steiner system
	by $S_p(t,k,n).$
\end{defn}
{\bf{Proof of Theorem \ref{F}} :}

\begin{proof} We fix $n\geq k\geq 3,n\in\mathbb{N}$. As in \cite[Proof of Theorem 1.1-(i)]{GAMS15}, we take commuting tuple of contractions $(T_1,\dots,T_n)$ acting on a finite dimensional Hilbert space $\mathcal{H}$. Consider the homogeneous polynomial of degree $k$ in $n$-variables, \[P(z_1,\dots,z_n)=\sum_{\{i_1,\dots,i_k\}\in S_p(k-1,k,n)}c_{i_1,\dots,i_k}z_{i_1}\dots z_{i_k}\] with $c_{i_1,\dots,i_k}\in\{1,-1\}$ and $S_p(k-1,k,n)$ a Partial Steiner system as in \cite[Proof of Theorem 1.1-(i)]{GAMS15}. Together with elementary results about Fourier multipliers (see \cite{LA71}), $\|P\|_{M_2(\mathbb Z^n)}=\|P\|_{\infty,\mathbb T^n}$ and $\|P\|_{M_1(\mathbb Z^n)}=\sum_{\{i_1,\dots,i_k\}\in S_p(k-1,k,n)}|c_{i_1,\dots,i_k}|$, we have following facts from \cite{GAMS15}.
	  \begin{itemize} 
	  	\item[(i)] $\|P(T_1,\dots,T_n)\|_{\mathcal H\to\mathcal H}\geq \# S_p(k-1,k,n).$
	  	\item[(ii)] $\|P\|_{M_2(\mathbb Z^n)}\leq D\Big(n\log k\# S_p(k-1,k,n)\Big)^{\frac{1}{2}}$ with $D$ being an absolute constant less than $8.$
	  	\item[(iii)] $\|P\|_{M_1(\mathbb Z^n)}=\# S_p(k-1,k,n).$
	  	\item[(iv)]  $\# S_p(k-1,k,n)\geq \frac{\binom n{k-1}}{k}(1-o(1)).$
	  \end{itemize}
 We have the following estimate on multiplier norm by complex interpolation \cite{LA71},
 \begin{equation}\label{sugge1}
 \|P\|_{M_p(\mathbb Z^n)}\leq {\# S_p(k-1,k,n)}^{-1+\frac{2}{p}}{\Big(D\sqrt{n\log k\# S_p(k-1,k,n)}}\Big)^{2(1-\frac{1}{p})}.
 \end{equation}
	  
  	 By Lemma \ref{PIEM}, we can view $\mathcal{H}$ as a complemented subspace of $L^p([0,1]).$ Slightly abusing the notations, we write \[L^p([0,1])=\mathcal H\oplus F.\] For $1\leq i\leq n,$ define $\widetilde{T_i}:L^p([0,1])\to L^p([0,1])$ as $\widetilde{T_i}(x\oplus y)\colon=T(x)\oplus 0,x\in\mathcal H,y\in F.$ It follows from \Cref{PIEM} that we have $\|\widetilde{T_i}\|_{L^p([0,1])\to L^p([0,1])}\leq(\mathbb{E}|G|^{p^\prime})^{\frac{1}{p^\prime}}.$ Note that $\mathbf{\widetilde{T}}\colon=(\widetilde{T_1},\dots,\widetilde{T_n})$ is a commuting tuple. Therefore, if we define $S_i\colon=(\mathbb{E}|G|^{p^\prime})^{-\frac{1}{p^\prime}}
  	 \widetilde{T_i}$, then $\mathbf S\colon=(S_1,\dots,S_n)$ is a commuting tuple of contractions on $L^p([0,1]).$ 

Let us notice the following 
\begin{eqnarray}\label{sugge}\nonumber\|P(\mathbf S)\|_{L^p([0,1])\to L^p([0,1])}&=&\Big\|\sum_{\{i_1,\dots,i_k\}\in S_p(k-1,k,n)}c_{i_1,\dots,i_k}S_{i_1}\dots S_{i_k}\Big\|_{L^p([0,1])\to L^p([0,1])}\\ \nonumber
&
=&
\Big\|\Big(\mathbb{E}|G|^{p^\prime}\Big)^{-\frac{k}{p^\prime}}\sum_{\{i_1,\dots,i_k\}
	\in S_p(k-1,k,n)}c_{i_1,\dots,i_k}\widetilde{T_{i_1}}\widetilde{T_{i_1}}\dots\widetilde{T_{i_k}}
\Big\|_{L^p([0,1])\to L^p([0,1])}\\\nonumber
&=&\Big(\mathbb{E}|G|^{p^\prime}\Big)^{-\frac{k}{p^\prime}}\sup_{z\in L^p([0,1]),\|z\|_p=1}\Big\|P(\widetilde{\mathbf T})z\Big\|_{L^p([0,1])}\\\nonumber
&\geq& \Big(\mathbb{E}|G|^{p^\prime}\Big)^{-\frac{k}{p^\prime}}\sup_{x\in \mathcal H,\|x\|_{\mathcal H}=1}\Big\|P(\widetilde{\mathbf T})(x\oplus 0)\Big\|_{\mathcal H}\\\nonumber
&=&\Big(\mathbb{E}|G|^{p^\prime}\Big)^{-\frac{k}{p^\prime}}\sup_{x\in \mathcal H,\|x\|_{\mathcal H}=1}\Big\|P(\mathbf{T})x\Big\|_{\mathcal H}\\
&=&\Big(\mathbb{E}|G|^{p^\prime}\Big)^{-\frac{k}{p^\prime}}\|P(\mathbf T)\|_{\mathcal H\to\mathcal H}\geq\Big(\mathbb{E}|G|^{p^\prime}\Big)^{-\frac{k}{p^\prime}}\# S_p(k-1,k,n).
\end{eqnarray}
Therefore, by \eqref{sugge1} and \eqref{sugge}, we have the following estimate.
\begin{eqnarray}\label{rkk}
\nonumber\frac{\|P(\mathbf{S})\|_{L^p([0,1])\to L^p([0,1])}}{\|P\|_{M_p(\mathbb{Z}^n)}}&\geq &\frac{\Big(\mathbb{E}|G|^{p^\prime}\Big)^{-\frac{k}{p^\prime}}\# S_p(k-1,k,n)}{{\# S_p(k-1,k,n)}^{-1+\frac{2}{p}}{(D\sqrt{n\log k\# S_p(k-1,k,n)}})^{2(1-\frac{1}{p})}}\\
&=&\frac{{\Big(\mathbb{E}|G|^{p^\prime}\Big)^{-\frac{k}{p^\prime}}\# S_p(k-1,k,n)}^{\frac{1}{p^\prime}}}{(D\sqrt{n\log k})^{\frac{2}{p^\prime}}}.
\end{eqnarray}
We use the estimate $\# S_p(k-1,k,n)\geq \frac{\binom n{k-1}}{k}(1-o(1))$ and obtain from \eqref{rkk}
\begin{eqnarray*}C_p(k,n)&\geq &\frac{\|P(\mathbf{S})\|_{L^p([0,1])\to L^p([0,1])}}{\|P\|_{M_p(\mathbb{Z}^n)}}\\
	&\geq &\Big(\mathbb{E}|G|^{p^\prime}\Big)^{-\frac{k}{p^\prime}}
(\frac{1}{k}{{n}\choose{k-1}}(1-o(1)))^{\frac{1}{p^\prime}}(D\sqrt{n\log k})^{-\frac{2}{p^\prime}}\\
&\geq &D^{-\frac{2}{p^\prime}}(1-o(1))\Big(\mathbb{E}|G|^{p^\prime}\Big)^{-\frac{k}{p^\prime}}
\Big(\frac{{{n}\choose{k-1}}}{nk\log k}\Big)^{\frac{1}{p^\prime}}\\
&\geq &D^{-\frac{2}{p^\prime}}(1-o(1))\Big(\mathbb{E}|G|^{p^\prime}\Big)^{-\frac{k}{p^\prime}}n^{\frac{k-2}{p^\prime}}.
\end{eqnarray*}
This completes the proof of the theorem.
\end{proof}
For any $n\times n$ complex matrix $A\colon=\big (\! \big (a_{ij}\big )\!\big ),$
define $P_{\!_A},$ as $P_{\!_A}(z_1,\ldots,z_n)=\sum_{i,j=1}^n a_{ij} z_i z_j.$ Let $\mathcal{H}$ be a separable Hilbert space and
$\{e_j\}_{j\in\mathbb{N}}$ be a orthonormal basis of $\mathcal{H}$.
For any $x\in\mathcal{H}$,
let us define $x^{\sharp}\colon\mathcal{H}\to\mathbb C$ by $x^{\sharp}(y)=\sum_{j}x_jy_j,$
where $x=\sum_j x_je_j$ and $y=\sum_j y_je_j$.
For $x,y\in\mathcal{H}$, we set $\left[x^{\sharp},y\right]=x^{\sharp}(y).$
Let $\mathcal{H}^{\sharp}\colon=\left\{x^{\sharp}\colon x\in\mathcal H\right\}.$
Let $\mathcal{H}^{\sharp}$ be equipped with the operator norm.
The map $\phi\colon\mathcal{H}\to\mathcal{H}^{\sharp}$ defined by
$\phi(x)=x^{\sharp}$ is a linear onto isometry,
where $\mathcal{H}^{\sharp}$ is equipped with the operator norm.
\begin{defn}[Varopoulos Operator]\cite{GU15}
	Let $\mathcal{H}$ be a separable Hilbert space.
	For $x,y\in\mathcal{H}$,
	define $T_{x,y}\colon\mathbb C \oplus \mathcal{H} \oplus \mathbb C \to \mathbb C \oplus \mathcal{H} \oplus \mathbb C$ by
	\[T_{x,y}=
	\left(
	\begin{array}{ccc}
		0 & x^{\sharp} & 0\\
		0 & 0 & y\\
		0 & 0 & 0\\
	\end{array}
	\right).\]
	The operator $T_{x,y}$ will be called Varopoulos operator
	corresponding to the pair of vectors $x,y$.
	If $x=y$ then $T_{x,y}$ will simply be denoted by $T_x$.
\end{defn}
\begin{lem}\label{sur}\cite{GUR18}
Let $(a_{ij})_{i,j=1}^n$ be a non-negative definite matrix. Then,
\[\sup\limits_{|z_i|=|z_j|=1,1\leq i,j\leq n}\Bigg|\sum_{i,j=1}^na_{ij}z_iz_j\Bigg|=\sup_{z_i,z_j\in\{+1,-1\},1\leq i,j\leq n}\sum_{i,j=1}^na_{ij}z_iz_j.\]
\end{lem}

{\bf {Proof of Theorem \ref{2decree}}:}

\begin{proof}

To prove the theorem, we need the Reeds-Fishburn matrices which were used in the beginning of Section 3. in \cite{GUR18} to produce a large class of concrete examples for which von Neumann inequality fails. This example originally goes back to \cite{FIR94}. For $l=k(k-1)$, define $F_k=\{v_1,\dots,v_{k(k-1)}\}$,
the set of all $k$-dimensional vectors with two non-zero components,
either $1$ and $1$ or $1$ and $-1,$ appearing in that order.
Define a real $l\times l$ non-negative definite matrix
$A_k=(a_{ij})_{1\leq i,j\leq l}$
where $a_{ij}=\langle v_i,v_j\rangle$ for $1\leq i,j\leq l.$ It has been proved in \cite{FIR94} that
$$\sup_{\|X_i\|_{\ell_\mathbb{R}^2}=1}\sum_{i,j=1}^la_{ij}\langle X_i,X_j\rangle=2k(k-1)^2\ \text{and}\sup_{\omega_i\in\{1,-1\}}\sum_{i,j=1}^la_{ij}
\omega_i\omega_j=\frac{2k(k-1)(2k-1)}{3}.$$ A careful counting argument gives us $\sum_{i,j=1}^l|a_{i,j}|=2k(k-1)(2k-3).$ Let us consider the polynomial $P_{A_k}(z_1,\dots,z_l)=\sum_{i,j=1}^la_{ij}z_iz_j$ to be the Reeds-Fishburn polynomial of order $l$, where $(a_{ij})_{i,j=1}^l$ is the Reeds-Fishburn matrix of order $l.$ By Lemma \ref{sur} and Riesz-Thorin complex interpolation, we obtain $$\|P_{A_k}\|_{M_p(\mathbb{Z}^l)}\leq(2k(k-1)(2k-3))^{1-\frac{2}{p^\prime}}\big(\frac{2k(k-1)(2k-1)}{3}\big)^{\frac{2}{p^\prime}}.$$ Therefore, for this class of polynomials, we have
\begin{equation}\label{darkar}
\frac{\sup_{\|X_i\|_{\ell_\mathbb{R}^2}=1}\sum_{i,j=1}^la_{ij}\langle X_i,X_j\rangle}{\|P_{A_k}\|_{M_p(\mathbb{Z}^l)}}\geq\frac{2k(k-1)^2}{(2k(k-1)(2k-3))^{1-\frac{2}{p^\prime}}\big(\frac{2k(k-1)(2k-1)}{3}\big)^{\frac{2}{p^\prime}}}.
\end{equation} If we consider, the commuting tuple of Varopoulos operators $\mathbf{T_X}=\{T_{X_i}\colon 1\leq i\leq l\},$ where $X_i$'s are real $\ell^2$-unit vectors (see \cite{GUR18} for more explanation). We can immediately see that $$\sup_{\|X_i\|_{\ell_\mathbb{R}^2}=1}\|P_{A_k}(\mathbf{T_X})\|_{\ell_\mathbb{R}^2\to\ell_\mathbb{R}^2 }=\sup_{\|X_i\|_{\ell_\mathbb{R}^2}=1}\sum_{i,j=1}^la_{ij}\langle X_i,X_j\rangle.$$ Therefore, by \eqref{darkar} and above, we have an estimate $$\frac{\|P_{A_k}(\mathbf{T_X})\|_{\ell_{\mathbb R}^2\to\ell_{\mathbb R}^2}}{\|P_{A_k}\|_{M_p(\mathbb{Z}^l)}}\geq\frac{2k(k-1)^2}{(2k(k-1)(2k-3))^{1-\frac{2}{p^\prime}}\big(\frac{2k(k-1)(2k-1)}{3}\big)^{\frac{2}{p^\prime}}},$$ which clearly goes to $\frac{1}{2}9^{p^\prime}$ as $k\to\infty.$ Therefore, proceeding as in Theorem \ref{F} and pretending that $T_{X_i}$'s are acting on the Hilbert space part of $L^p([0,1])$, we have by Lemma \ref{PIEM} that \[\lim_{k\to\infty}C_p(2,k(k-1))\geq \frac{1}{2}9^{\frac{1}{p^\prime}}\Big(\mathbb{E}|G|^{p^\prime})^{\frac{1}{p^\prime}}\Big)^{-2}>1.\]
This completes the proof of the theorem.
\end{proof}
\begin{rem}Note that $\lim_{p\to 2^-}\Big(\mathbb{E}|G|^{p^\prime})^{\frac{1}{p^\prime}}\Big)^{-2}=1.$ Therefore, there is a range of $p$ for which the above theorem holds true.
\end{rem}

One can produce a family of examples for which $C_p(2,3)>1$ for a range of $p\in(1,2)$ by deploying the examples produced in \cite{GUR18} in our situation.

\[C_p(2,3)\geq
\begin{cases}
\Big(\mathbb{E}|G|^{p^\prime})^{\frac{1}{p^\prime}}\Big)^{-2}\frac{3-2\alpha}{(7+2\alpha)^{\frac{2}{p^\prime}}(6+2|\alpha|)^{1-\frac{2}{p^\prime}}}, & \alpha \in (-1/2,0)\\
\Big(\mathbb{E}|G|^{p^\prime})^{\frac{1}{p^\prime}}\Big)^{-2}\frac{3-2\alpha-1/\alpha}{(7+2\alpha)^{\frac{2}{p^\prime}}(6+2|\alpha|)^{1-\frac{2}{p^\prime}}}, & \alpha \in [-1,-1/2]\\
\Big(\mathbb{E}|G|^{p^\prime})^{\frac{1}{p^\prime}}\Big)^{-2}\frac{3-2\alpha-1/\alpha}{(3-2\alpha)^{\frac{2}{p^\prime}}(6+2|\alpha|)^{1-\frac{2}{p^\prime}}}, & \alpha \in (-\infty,-1).
\end{cases}
\]
\section{Generalization on non-commutative $L^p$ spaces }\label{NCOM}
In this section, we consider multivariate Matsaev's conjecture in the set up of non-commutative $L^p$-spaces. Our present section is motivated by \cite{AR13}. We follow notations and terminologies from \cite{AR13}. However, for more elaboration on the various notions we use in this section, we refer \cite{AR13} and references therein. Let $\mathcal{M}$ be a von Neumann algebra with a normal semifinite faithful trace $\tau$. Then, denote $B(\ell_I^2)\overline{\otimes}\mathcal{M}$ to be the von Neumann algebra tensor product. It is clear that $Tr\otimes\tau$
becomes a normal semifinite faithful trace on $B(\ell_I^2)\overline{\otimes}\mathcal{M}.$ Also the algebraic tensor product $S^p_I\otimes L^p(\mathcal{M})$ is dense in the associated non-commutative $L^p$-space,
$L^p(B(\ell_I^2)\overline{\otimes}\mathcal{M},Tr\otimes\tau)$ for $1\leq p<\infty.$ To prove the next theorem, we recall the basics of Fermion algebras.

Let $S_n$ be the permutation group over the set $\{1,\dots,n\}.$ If $\sigma\in S_n,$ we denote $|\sigma|$ to be the total number of inversions of $\sigma,$ i.e.,
$|\sigma|\colon=\#\{(i,j)\colon\sigma(i)>\sigma(j),1\leq i,j\leq n\}.$ Let $H$ be a real Hilbert space and $H_{\mathbb{C}},$ its complexification. Consider the vector space
$$\mathbb{C}v\oplus\bigoplus_{n\geq 1}H_{\mathbb{C}}^{\otimes n},$$ with a sesquilinear form defined as
\[\langle h_1\otimes\cdots\otimes h_n,k_1\otimes\cdots k_n\rangle_{-1}\colon=\sum_{\sigma\in S_n}{(-1)}^{|\sigma|}\langle h_1,k_{\sigma(1)}\rangle_{H_{\mathbb{C}}}\cdots\langle h_n,k_{\sigma(n)}\rangle_{H_{\mathbb{C}}},\] where the unit vector $v$ is called the {\it vacuum}. One needs to take quotient of the space $\mathbb{C}v\oplus\bigoplus_{n\geq 1}H_{\mathbb{C}}^{\otimes n}$ by the kernel of $\langle .,. \rangle_{-1}$ to get an inner product. In this way, one obtains  antisymmetric Fock space denoted by $\mathcal F_{-1}(H)$. For $e\in H,$ one defines the so called
creation operator  $l(e)\colon\mathcal{F}_{-1}(H)\to \mathcal{F}_{-1}(H)$ as $$l(e)(h_1\otimes\cdots\otimes h_n)\colon=e\otimes h_1\otimes\cdots\otimes h_n.$$ The creation operators satisfy the following relation
$$l(f)^*l(e)+l(e)l(f)^*=\langle f,e\rangle_H I_{\mathcal{F}_{-1}(H)}.$$ We denote $w(e)\colon\mathcal{F}_{-1}(H)\to \mathcal{F}_{-1}(H)$ to be the operator \[w(e)=l(e)+l(e)^*.\] The von Neumann algebra $\Gamma_{-1}(H)$ is the von Neumann algebra generated by
$\{w(e)\colon e\in H\}.$ It is a finite von Neumann algebra with the trace $\tau(x)=\langle v,xv\rangle,$ where $x\in\Gamma_{-1}(H).$ The space $\Gamma_{-1}(H)$ is called the Fermion algebra. It follows from the Wick formula that \[\tau(\omega(e)\omega(f))=\langle e,f\rangle_{H}.\] Also $\omega(e)^2=I_{\mathcal{F}_{-1}(H)},$ $\|e\|_H=1.$ We refer \cite{BOS91} and \cite{BOKS97} for detailed discussions on this topic. 

For simplicity, we give the proof of Theorem \ref{NCD} for two commuting unital completely positive Schur multipliers $(M_A,M_B)$ associated with real matrices $A$ and $B$ respectively.

{\bf{Proof of Theorem \ref{NCD}}}
\begin{proof}
Denote $A=(a_{ij})_{ij\in I}$ and $B=(b_{ij})_{i,j\in I}.$ Since the operators $M_A$ and $M_B$ are completely positive on $B(\ell_I^2),$ one can define positive symmetric bilinear forms $\langle.,.\rangle_A$ and $\langle.,.\rangle_B$ on the real linear span of $e_i,i\in I$ by
$\langle e_i,e_j\rangle_A=a_{ij}$ and $\langle e_i,e_j\rangle_B=b_{ij}$ respectively. We denote by $\mathcal{H}_A$ and $\mathcal{H}_B$ to be the completion of the real pre-Hilbert space obtained by taking quotient by the corresponding
kernels of $\langle.,.\rangle_A$ and $\langle.,.\rangle_B$ respectively. Let us denote $[e_i]_A$ and $[e_i]_B$ to be the equivalence classes corresponding to $e_i$ in $\mathcal{H}_A$ and $\mathcal{H}_B$ respectively for $i\in I.$ To distinguish between $\mathcal H_A$ and $\mathcal H_B$ and objects associated to them, we simply add suffix by $A$ and $B$ respectively to the introduced notations. We consider the following tensor von Neumann algebra \[\mathcal{M}=B(\ell_I^2)\overline{\otimes}\big(\overline{\otimes}_
{\mathbb{Z}}((\Gamma_{-1}(\mathcal{H}_A),\tau_A)\overline{\otimes}(\Gamma_{-1}(\mathcal{H}_B),\tau_B))\big)\]equipped with faithful semifinite normal tensor trace $\tau_{\mathcal{M}}=Tr\otimes\cdots\otimes(\tau_A\otimes\tau_B)
\otimes\cdots.$ We define the following element
\[d_1\colon=\sum_{i\in I}e_{ii}\otimes\cdots\underbrace{\otimes}_0(w_A([e_i]_A)\otimes I)\otimes(I\otimes I)\otimes\cdots,\] where $w([e_i]_A)\otimes I$ is in the $0$-th position. In a similar fashion define  \[d_2\colon=\sum_{j\in I}e_{jj}\otimes\cdots\underbrace{\otimes}_0(I\otimes w_B([e_j]_B))\otimes(I\otimes I)\otimes\cdots.\] As in the singe variable case \cite{AR13}, one easily checks that $d_1$ and $d_2$ are symmetries, i.e., self-adjoint unitary elements. We have the normal faithful trace preserving conditional expectation
operator $E\colon\mathcal{M}\to B(\ell_I^2)$ as \[E=I_{B(\ell_I^2)}\otimes\cdots\otimes(\tau_A\otimes\tau_B)\otimes\cdots.\]
Again define the inclusion map $J\colon B(\ell_I^2)\to\mathcal{M}$ as following  \[J(x)\colon=x\otimes\cdots\otimes(I\otimes I)\otimes\cdots.\] Note that $J$ is injective normal unital $*$-homomorphism which preserves trace.
For $i=1,2,$ we define the following shift operators
\[\mathcal{S}_i\colon\overline{\otimes}_{\mathbb{Z}}((\Gamma_{-1}(\mathcal{H}_A)
,\tau_A)\overline{\otimes}(\Gamma_{-1}
(\mathcal{H}_B),\tau_B))\to \overline{\otimes}_{\mathbb{Z}}((\Gamma_{-1}(\mathcal{H}_A),\tau_A)\overline{\otimes}(\Gamma_{-1}(\mathcal{H}_B),\tau_B))\] as the following
\[\mathcal{S}_1(\cdots\otimes(x_0\otimes y_0)\otimes(x_1\otimes y_1)\otimes\cdots)\colon=\cdots\otimes(x_{-1}\otimes y_0)\otimes(x_0\otimes y_1)\otimes\cdots\] and similarly, \[\mathcal{S}_2(\cdots\otimes(x_0\otimes y_0)\otimes(x_1\otimes y_1)\otimes\cdots)\colon=\cdots\otimes(x_0\otimes y_{-1})\otimes(x_1\otimes y_0)\otimes\cdots.\] Now let us define the linear maps on $\mathcal{M}$ as
\[U_i(y)\colon=d_i((I_{B(\ell_I^2)}\otimes\mathcal{S}_i)(y))d_i\] for $i=1,2.$ Clearly, each $U_i$ is a unital normal trace preserving $*$-automorphism of $\mathcal M$ for $i=1,2.$ To avoid notational complexity, let us introduce the following \[W^A_{ij}=w_A([e_i]_A)w_A([e_j]_A)
\otimes I,\]\[ W^B_{ij}=I\otimes w_B([e_i]_B)w_B([e_j]_B).\]   and \[W_{ij}^{A,B}=w_A([e_i]_A)w_A([e_j]_A)
\otimes w_B([e_i]_B)w_B([e_j]_B).\]
We first check that $(U_1,U_2)$ is a commuting couple. For this, let us take an element \[y=a\otimes\cdots\underbrace{\otimes}_0(x_0\otimes y_0)\otimes(x_1\otimes y_1)\otimes\cdots\in\mathcal M,\] and compute
\begin{eqnarray}\label{rev1}
\nonumber U_2(y)&=& d_2((I_{B(\ell_I^2)}\otimes \mathcal{S}_2)(a\otimes\cdots\underbrace{\otimes}_0(x_0\otimes y_0)\otimes(x_1\otimes y_1)\otimes\cdots))d_2\\\nonumber
&=&d_2(a\otimes\cdots\underbrace{\otimes}_0(x_0\otimes y_{-1})\otimes(x_1\otimes y_0)\otimes\cdots)d_2\\\nonumber
&=&\sum_{j,k}e_{jj}ae_{kk}\otimes\cdots \underbrace{\otimes}_0(x_0\otimes w_B([e_j]_B)y_{-1}w_B([e_k]_B))\otimes(x_1\otimes y_0)\otimes\cdots.
\end{eqnarray}
Therefore, by above we have that $U_1U_2(y)$ is also equal to
\begin{eqnarray}
\nonumber\sum_{j,k}d_1((I_{B(\ell_I^2)}\otimes\mathcal{S}_1)(e_{jj}ae_{kk}
\otimes\cdots\underbrace{\otimes}_0(x_0\otimes w_B([e_j]_B)y_{-1}w_B([e_k]_B))\otimes(x_1\otimes y_0)\otimes\cdots))d_1\\\nonumber
=\sum_{j,k}d_1(e_{jj}ae_{kk}\otimes\cdots\underbrace{\otimes}_0(x_{-1}\otimes w_B([e_j]_B)y_{-1}w_B([e_k]_B))\otimes(x_0\otimes y_0)\otimes\cdots)d_1\\\nonumber
=\sum_{r,s,j,k}e_{rr}e_{jj}ae_{kk}e_{ss}\otimes\cdots\underbrace{\otimes}_0
(w_A([e_r]_A)x_{-1}w_A([e_s]_A)\otimes w_B([e_j]_B)y_{-1}w_B([e_k]_B))\otimes(x_0\otimes y_0)\otimes\cdots\\\nonumber
=\sum_{r,s}e_{rr}ae_{ss}\otimes\cdots\underbrace{\otimes}_0(w_A([e_r]_A)x_{-1}w_A([e_s]_A)\otimes w_B([e_r]_B)y_{-1}w_B([e_s]_B))\otimes(x_0\otimes y_0)\otimes\cdots.
\end{eqnarray}
Similarly, we see that 
\begin{eqnarray}\label{rev2}
\nonumber U_1(y)&=& d_1((I_{B(\ell_I^2)}\otimes \mathcal{S}_1)(a\otimes\cdots\underbrace{\otimes}_0(x_0\otimes y_0)\otimes(x_1\otimes y_1)\otimes\cdots))d_1\\\nonumber
&=&d_1(a\otimes\cdots\underbrace{\otimes}_0(x_{-1}\otimes y_{0})\otimes(x_0\otimes y_1)\otimes\cdots)d_1\\\nonumber
&=&\sum_{j,k}e_{jj}ae_{kk}\otimes\cdots \underbrace{\otimes}_0(w_A([e_j]_A)x_{-1}w_A([e_k]_A)\otimes y_0)\otimes(x_0\otimes y_1)\otimes\cdots.
\end{eqnarray}
Therefore, we have $U_2U_1(y)$ is equal to
\begin{eqnarray*}
\nonumber\sum_{j,k}d_2((I_{B(\ell_I^2)}\otimes\mathcal{S}_2)(e_{jj}ae_{kk}\otimes\cdots \underbrace{\otimes}_0(w_A([e_j]_A)x_{-1}w_A([e_k]_A)\otimes y_0)\otimes(x_0\otimes y_1)\otimes\cdots))d_2\\\nonumber
=\sum_{j,k}d_2(e_{jj}ae_{kk}\otimes\cdots\underbrace{\otimes}_0(w_A([e_j]_A)x_{-1}w_A([e_k]_A)\otimes y_{-1})\otimes(x_0\otimes y_0)\otimes\cdots)d_2\\\nonumber
=\sum_{r,s,j,k}e_{rr}e_{jj}ae_{kk}e_{ss}\otimes\cdots\underbrace{\otimes}_0
(w_A([e_j]_A)x_{-1}w_A([e_k]_A)\otimes w_B([e_r]_B)y_{-1}w_B([e_s]_B))\otimes(x_0\otimes y_0)\otimes\cdots\\\nonumber
=\sum_{r,s}e_{rr}ae_{ss}\otimes\cdots\underbrace{\otimes}_0(w_A([e_r]_A)x_{-1}w_A([e_s]_A)\otimes w_B([e_r]_B)y_{-1}w_B([e_s]_B))\otimes(x_0\otimes y_0)\otimes\cdots.
\end{eqnarray*}
Thus, we have that $(U_1,U_2)$ is a commuting couple.

We claim that
\begin{itemize}
 \item[(1)]$U_1^kU_2^l(J(x))=\sum_{i,j}x_{ij}e_{ij}\otimes
\cdots\underbrace{\otimes}_0(W_{ij}^{A,B})^{\otimes l}\otimes(W^A_{ij})^{\otimes(k-l)}\otimes(I\otimes I)\otimes\cdots,$ for all positive integers $k,l$
 with $k>l,$ where we denote $a^{\otimes l}\colon=\underbrace{a\otimes\cdots  \otimes a}_l.$
 \item[(2)]$U_1^kU_2^l(J(x))=\sum_{i,j}x_{ij}e_{ij}\otimes\cdots\
 \underbrace{\otimes}_0(W_{ij}^{A,B})^{\otimes k}\otimes(W^B_{ij})^{\otimes(l-k)}\otimes(I\otimes I)\otimes\cdots,$ for all integers $k,l$
 with $k<l.$
 \item[(3)]$U_1^kU_2^k(J(x))=\sum_{i,j}x_{ij}e_{ij}\otimes\cdots
 \underbrace{\otimes}_0(W_{ij}^{A,B})^{\otimes k}\otimes(I\otimes I)\otimes\cdots,$ for all integers $k.$
\end{itemize}
We shall only prove (1). We proceed by induction. Assuming that the identity is true for some $k>l,$ we have that the quantity $U_1^{k+1}U_2^l(J(x))$ is equal to the following
\begin{eqnarray*}
&& d_1\Big(\Big(I_{B(\ell_I^2)}\otimes\mathcal{S}_1\Big)
\Big(\sum_{i,j}x_{ij}e_{ij}\otimes\cdots\underbrace{\otimes}_0
(W_{ij}^{A,B})^{\otimes l}\otimes(W^A_{ij})^{\otimes(k-l)}\otimes(I\otimes I)\otimes\cdots\Big)\Big)d_1\\
 &=&\Big(\sum_{r\in I}e_{rr}\otimes\cdots\underbrace{\otimes}_0(w_A([e_r]_A)\otimes I)\otimes(I\otimes I)\otimes\cdots\Big)\Big(\Big(I_{B(\ell_I^2)}\otimes\mathcal{S}_1\Big)\\
 &&\Big(\sum_{i,j}x_{ij}e_{ij}\otimes\cdots\underbrace{\otimes}_0(W_{ij}^{A,B})^{\otimes l}\otimes(W^A_{ij})^{\otimes(k-l)}\otimes(I\otimes I)\otimes\cdots\Big)\Big)\\
&&\Big(\sum_{s\in I}e_{ss}\otimes\cdots\underbrace{\otimes}_0(w_A([e_s]_A)\otimes I)\otimes(I\otimes I)\otimes\cdots\Big)\\
 &=&\sum_{i,j,r,s}x_{ij}e_{ij}e_{rr}e_{ss}\otimes\cdots\underbrace
 {\otimes}_0\Big((w_A([e_r]_A)w_A([e_s]_A)
 \otimes w_B([e_i]_B)w_B([e_j]_B)\Big)\otimes\Big((W_{ij}^{A,B})^{\otimes l}\\
 &&\otimes(W^A_{ij})^{\otimes(k-l)}\otimes(I\otimes I)\otimes\cdots\Big)\\
 &=&\sum_{i,j}x_{ij}e_{ij}\otimes\cdots\underbrace{\otimes}_0(W_{ij}^{A,B})^{\otimes l}\otimes(W^A_{ij})^{\otimes(k+1-l)}\otimes(I\otimes I)\otimes\cdots.
\end{eqnarray*}
For $k>l,$ we observe the following
 \begin{eqnarray*}
 EU_1^kU_2^lJ(x)&=&\Big(I_{B(\ell_I^2)}\otimes\cdots\otimes
 (\tau_A\otimes\tau_B)\otimes\cdots\Big)\\ &&\Big(\sum_{i,j}x_{ij}e_{ij}\otimes\cdots\underbrace{\otimes}_0(W_{ij}^{A,B})^{\otimes l}\otimes(W^A_{ij})^{\otimes(k-l)}\otimes(I\otimes I)\otimes\cdots\Big)\\
 &=&\sum_{i,j}\tau_A(\omega_A([e_i]_A)\omega_A([e_j]_A))^k\tau_B
 (\omega_B([e_i]_B)\omega_B([e_j]_B))^lx_{ij}e_{ij}\\
 &=&\sum_{ij}\big(\langle e_i,e_j\rangle_A\big)^k\big(\langle e_i,e_j\rangle_B\big)^lx_{ij}e_{ij}\\
 &=&\sum_{ij}a_{ij}^kb_{ij}^lx_{ij}e_{ij}\\
 &=&M_A^kM_B^l(x).
 \end{eqnarray*}
 This completes the proof.
 \end{proof}
We present the proof of Theorem \ref{GVNA} only for two unital completely positive Fourier $(M_t,M_s)$ associated to real functions $t,s\colon G\to\mathbb R.$

{{\bf{Proof of Theorem \ref{GVNA}:}}}
\begin{proof}
Following \cite{AR13} (Proof of Theorem 4.6 ), since $M_t\colon VN(G)\to VN(G)$ is completely positive, one can define a positive symmetric bilinear form $\langle , \rangle_{\ell^{2,t}}$ on the real span of $e_g,$ $g\in G,$ $(e_g)_{g\in G}$ being the standard basis of $\ell^2_G,$ as \[\langle e_g, e_h\rangle_{\ell^{2,t}}\colon=t_{g^{-1}h}\] for $g,h\in G.$ Denote $\ell^{2,t}$ to be the completion real pre-Hilbert space after quotienting by the associated kernel. Similarly, define $\ell^{2,s}$ to be the real Hilbert space corresponding to the Fourier multiplier $s.$
	For all $g\in G,$ consider the unital trace preserving $*$-automorphism \[\alpha(g)\colon\Gamma_{-1}(\ell^{2,t}\otimes_2\ell_{\mathbb Z}^2)\overline{\otimes}\Gamma_{-1}(\ell^{2,s}\otimes_2\ell_{\mathbb Z}^2)\to \Gamma_{-1}(\ell^{2,t}\otimes_2\ell_{\mathbb Z}^2)\overline{\otimes}\Gamma_{-1}(\ell^{2,s}\otimes_2\ell_{\mathbb Z}^2)\] defined as \[\alpha(g)(w(h\otimes v)\otimes \widetilde{w}(\widetilde{h}\otimes\widetilde{v}))\colon=w(gh\otimes v)\otimes \widetilde{w}(g\widetilde{h}\otimes\widetilde{v}).\] From the dynamical system $(\Gamma_{-1}(\ell^{2,t}\otimes_2\ell_{\mathbb Z}^2)\overline{\otimes}\Gamma_{-1}(\ell^{2,s}\otimes_2\ell_{\mathbb Z}^2), G,\alpha)$ we define the crossed product \[\mathcal M=(\Gamma_{-1}(\ell^{2,t}\otimes_2\ell_{\mathbb Z}^2)\overline{\otimes}\Gamma_{-1}(\ell^{2,s}\otimes_2\ell_{\mathbb Z}^2))\rtimes _\alpha G.\]
We can identify $\Gamma_{-1}(\ell^{2,t}\otimes_2\ell_{\mathbb Z}^2)\overline{\otimes}\Gamma_{-1}(\ell^{2,s}\otimes_2\ell_{\mathbb Z}^2)$ 
as a subalgebra of $\mathcal M$. Let $J$ be the canonical normal unital injective
$*$-homomorphism $\text{VN}(G)\to\mathcal M$
. We denote by $\tau$ and $\widetilde{\tau}$ the faithful finite normal traces on $\Gamma_{-1}(\ell^{2,t}\otimes_2\ell_{\mathbb Z}^2)$ and $\Gamma_{-1}(\ell^{2,s}\otimes_2\ell_{\mathbb Z}^2)$ respectively. Let $\tau_{\mathcal M}$ be the canonical trace on $\mathcal M.$ For all $x\in \Gamma_{-1}(\ell^{2,t}\otimes_2\ell_{\mathbb Z}^2)\overline{\otimes}\Gamma_{-1}(\ell^{2,s}\otimes_2\ell_{\mathbb Z}^2)$ we have $\tau_{\mathcal M}(xJ(\lambda(g)))=\delta_{g,e_G}\tau\otimes\widetilde{\tau}(x).$
We denote by $E\colon\mathcal M\to \text{VN}(G)$ to be the canonical faithful normal trace preserving conditional expectation. Define the following shift operators \[\mathcal S\colon\Gamma_{-1}(\ell^{2,t}\otimes_2\ell_{\mathbb Z}^2)\overline{\otimes}\Gamma_{-1}(\ell^{2,s}\otimes_2\ell_{\mathbb Z}^2)\to \Gamma_{-1}(\ell^{2,t}\otimes_2\ell_{\mathbb Z}^2)\overline{\otimes}\Gamma_{-1}(\ell^{2,s}\otimes_2\ell_{\mathbb Z}^2)\] as $\mathcal S(w(h\otimes e_n)\otimes y)\colon=(w(h\otimes e_{n+1}))\otimes y$ and 
\[\widetilde{\mathcal S}\colon\Gamma_{-1}(\ell^{2,t}\otimes_2\ell_{\mathbb Z}^2)\overline{\otimes}\Gamma_{-1}(\ell^{2,s}\otimes_2\ell_{\mathbb Z}^2)\to \Gamma_{-1}(\ell^{2,t}\otimes_2\ell_{\mathbb Z}^2)\overline{\otimes}\Gamma_{-1}(\ell^{2,s}\otimes_2\ell_{\mathbb Z}^2)\] as $\widetilde{\mathcal S}(x\otimes \widetilde w(\widetilde{h}\otimes e_{n}))\colon=x\otimes \widetilde w(\widetilde{h}\otimes e_{n+1}).$ Thereafter, define the following operators 
$U\colon\mathcal M\to\mathcal M$ as \[U(x\lambda(g))=w(e_G\otimes e_0)\otimes 1\mathcal {S}(x)J(\lambda(g))w(e_G\otimes e_0)\otimes 1\] and $\widetilde U\colon\mathcal M\to\mathcal M$ as \[\widetilde U(x\lambda(g))\colon=1\otimes \widetilde w(e_G\otimes e_0)\widetilde{\mathcal S}(x)J(\lambda(g))1\otimes \widetilde w(e_G\otimes e_0).\] As in \cite{AR13}, it is easy to check that $(U,\widetilde{U})$ is a commuting tuple of unital trace preserving $*$-automorphisms of $\mathcal M.$
One can easily prove by induction that 
\begin{eqnarray*}
U^k \widetilde{U}^l J(\lambda(g))&=&w(e_G\otimes e_0)w(e_G\otimes e_1)\dots w(e_G\otimes e_{k-1})w(g\otimes e_{k-1})\dots  w(g\otimes e_1)w(g\otimes e_0)\\
&&\otimes \widetilde{w}(e_G\otimes e_0)\widetilde{w}(e_G\otimes e_1)\dots \widetilde{w}(e_G\otimes e_{l-1})\widetilde{w}(g\otimes e_{l-1})\dots  \widetilde{w}(g\otimes e_1)\widetilde{w}(g\otimes e_0)J(\lambda(g)).
\end{eqnarray*}
Let $E\colon\mathcal M\to \text{VN}(G)$ be the canonical conditional expectation operator. Then we have by above and \cite{AR13}
\begin{eqnarray*}
	EU^k \widetilde{U}^l J(\lambda(g))&=&\tau\otimes \widetilde \tau \Big(w(e_G\otimes e_0)w(e_G\otimes e_1)\dots w(e_G\otimes e_{k-1})w(g\otimes e_{k-1})\dots  w(g\otimes e_1)w(g\otimes e_0)\\
	&&\otimes\widetilde{w}(e_G\otimes e_0)\widetilde{w}(e_G\otimes e_1)\dots \widetilde{w}(e_G\otimes e_{l-1})\widetilde{w}(g\otimes e_{l-1})\dots  \widetilde{w}(g\otimes e_1)\widetilde{w}(g\otimes e_0)\Big)\lambda(g)\\
	&=&\tau\Big(w(e_G\otimes e_0)w(e_G\otimes e_1)\dots w(e_G\otimes e_{k-1})w(g\otimes e_{k-1})\dots  w(g\otimes e_1)w(g\otimes e_0)\Big)\\
	&&\widetilde{\tau}\Big(\widetilde{w}(e_G\otimes e_0)\widetilde{w}(e_G\otimes e_1)\dots \widetilde{w}(e_G\otimes e_{l-1})\widetilde{w}(g\otimes e_{l-1})\dots  \widetilde{w}(g\otimes e_1)\widetilde{w}(g\otimes e_0)\Big)\lambda(g)\\
	&=&(t_g)^k(s_g)^l\lambda(g)\\
	&=&M_t^kM_s^l\lambda(g).
\end{eqnarray*}
Hence the theorem.
\end{proof}
We now prove a dilation theorem for multi-parameter $w^*$-semigroup of self adjoint unital completely positive Schur multipliers on $B(L^2(\Omega)).$ This theorem is a multivariate generalization of similar theorems proved in \cite{AR13}, \cite{AR19} and \cite{AR191}. Before presenting the statement of the theorem, we briefly discuss some necessary background. We refer \cite{AR13}, \cite{AR191} and references therein for more elaboration on the terminologies and the tools, we use here.

Suppose $H$ is a real Hilbert space. An $H$-isonormal process on a probability space $(\Omega_0,\mu)$ is a linear map $W\colon H\to L^0(\Omega_0)$ which satisfies the following properties:
\begin{enumerate}
	\item[(i)] For all $h\in H$ the random variable $W(h) $ is centred real Gaussian.
	\item[(ii)] For all $h_1,h_2\in H$ we have $\mathbb{E}(W(h_1)W(h_2))=\langle h_1,h_2\rangle_{H}.$
	\item[(iii)] The linear span of the products $\prod_{i=1}^mW(h_i)$ with $m\geq 0$ and $h_i\in H$ for $1\leq i\leq m$ is dense in real Hilbert space $L^2_\mathbb{R}(\Omega_0).$
\end{enumerate}
In above $L^0(\Omega_0)$ denotes the space of all real measurable functions on $\Omega_0$ and the case $m=0$ corresponds to the empty product and nothing but the constant function $1.$ The span of $e^{iW(h)}$ is actually a dense subset of $L^p(\Omega_0)$ for $1\leq p<\infty.$ We also have the identity \[\int_{\Omega}e^{-itW(h)}d\mu=e^{-\frac{t}{2}\|h\|_H^2}\] for $t\in\mathbb R$ and $h\in H.$

Let $\Omega$ be a $\sigma$-finite measure space. Define $S^\infty_\Omega\colon=S^\infty(L^2(\Omega)).$ For $f\in L^2(\Omega\times \Omega)$ define
\[K_f\colon L^2(\Omega)\to L^2(\Omega)\] as $K_f(\zeta)=\int_\Omega\zeta(s)f(s,.)ds.$ A measurable function $\phi\colon\Omega\times \Omega\to\mathbb C$ is called a Schur multiplier on $B(L^2(\Omega))$ if for any $f\in L^2(\Omega\times\Omega)$ we have $K_{\phi f}\in S^\infty_\Omega.$ It follows from the closed graph theorem that the map $K_f\mapsto K_{\phi f}$ is a bounded linear map from $S^\infty_\Omega$ to $S^\infty_\Omega.$ The second adjoint is a $w^*$-continuous map from $B(L^2(\Omega))$ to $B(L^2(\Omega))$ which is denoted by $M_\phi$ and called the Schur multiplier on $B(L^2(\Omega)).$ For any $g\in L^\infty(\Omega)$ we denote $\mathscr{L}_g\colon L^2(\Omega)\to L^2(\Omega)$ to be the usual multiplication operator defined by $\mathscr{L}_g\zeta=g\zeta.$ Equipped with this, we state our theorem. For simplicity, we deliver a proof for $n=2.$
\begin{thm}Let $(T_{t_1,t_2,\dots,t_n})_{t_1,t_2,\dots,t_\geq 0}$ be an $n$-parameter $w^*$-semigroup of self adjoint unital completely positive Schur multipliers on $B(L^2(\Omega)).$ Then, there exists a hyperfinite von Neumann algebra $\mathcal{M}$ equipped with a normal semifinite faithful trace, a $w^*$-semigroup $(U_{t_1,t_2,\dots,t_n})_{t_1,t_2,\dots,t_\geq 0}$ of unital trace preserving $*$-automorphisms on $\mathcal{M},$ a
	unital trace preserving one-to-one normal $*$-homomorphism $J\colon B(L^2(\Omega))\to\mathcal{M}$ such that \[T_{t_1,t_2.\dots,t_n}=EU_{t_1,t_2.\dots,t_n}J,\] for all $t_i\geq 0,$ $1\leq i\leq n,$ where $E\colon \mathcal{M}\to B(L^2(\Omega))$ is the canonical faithful normal trace preserving conditional expectation operator associated with $J.$
\end{thm}
\begin{proof}
 Note that $(T_{t_1,0})_{t\geq 0}$ is one parameter $w^*$-continuous semigroup of self adjoint unital completely positive Schur multipliers on $B(L^2(\Omega)).$ Therefore, by \cite[Theorem 3.3]{AR191}, there exists a real Hilbert space $H_1$ and a measurable map $\alpha\colon\Omega\to H_1$ such that the Schur multiplier $T_{t_1,0}$ is associated with the symbol \[g^1_{t_1}(s,r)=e^{-t_1\|\alpha_s-\alpha_r\|_{H_1}^2}.\] Similarly, for the semigroup $(T_{0,t_2})_{t_2\geq 0}$ we find a real Hilbert space $H_2$ and a measurable map $\beta\colon\Omega\to H_2$ such that the Schur multiplier $T_{0,t_2}$ is associated with the symbol \[g^2_{t_2}(s,r)=e^{-t_2\|\beta_s-\beta_r\|_{H_2}^2}.\]
 Denote $\mathcal M\colon=L^\infty(\Omega_0\times\Omega_0)\overline{\otimes}B(L^2(\Omega)).$ Clearly, $\mathcal M$ is a hyperfinite von Neumann algebra equipped with the normal semifinite faithful trace $\tau_{\mathcal M}\colon=\int_{\Omega_0\times \Omega_0}\otimes Tr.$ Then, as in \cite{AR191}, $\mathcal M$ is $*$-isomorphic to $L^\infty(\Omega_0\times \Omega_0,B(L^2(\Omega)).$ Let us define the canonical normal unital trace preserving $*$-homomorphism
 \[J\colon B(L^2(\Omega))\to L^\infty(\Omega_0\times \Omega_0,B(L^2(\Omega))\] as the following \[J(x)\colon=1\otimes x.\] We denote by $E\colon L^\infty(\Omega_0\times \Omega_0,B(L^2(\Omega))\to B(L^2(\Omega))$ to be the canonical faithful normal trace preserving conditional expectation of $\mathcal M$ to $B(L^2(\Omega)).$ For any $(\omega_1,\omega_2)\in \Omega_0\times \Omega_0,$ and $t_1,t_2\geq 0,$ we define \[k^1_{t_1,\omega_1}(s)\colon=e^{\sqrt{2}it_1(W(\alpha_s))(\omega_1)}\] and 
 \[k^2_{t_2,\omega_2}(s)\colon=e^{\sqrt{2}it_2(W(\beta_s))(\omega_2)}.\] For any $t_1,t_2\geq 0,$ define $D_{t_1,t_2}\in L^\infty(\Omega_0\times \Omega_0, B(L^2(\Omega))$ by \[D_{t_1,t_2}(\omega_1,\omega_2)\colon=\mathscr{L}_{k^1_{t_1,\omega_1}k^2_{t_2,\omega_2}}.\] Clearly, $D_{t_1,t_2}$ is a unitary element of $L^\infty(\Omega_0\times\Omega_0,B(L^2(\Omega)).$ For any $t_1,t_2\geq 0$ define \[U_{t_1,t_2}\colon L^\infty(\Omega_0\times\Omega_0,B(L^2(\Omega))\to L^\infty(\Omega_0\times\Omega_0,B(L^2(\Omega))\] as the following \[U_{t_1,t_2}g\colon=D_{t_1,t_2}gD_{t_1,t_2}^*.\] Then $(U_{t_1,t_2})_{t_1,t_2\geq 0}$ is a $w^*$-continuous semigroup of trace preserving $*$-automorphisms on $\mathcal M.$ Now for any $K_f$ and almost every $r$ we have
 \begin{eqnarray*}
 \begin{split}
 (EU_{t_1,t_2}J(K_f))(\zeta)(r)&=(EU_{t_1,t_2}(1\otimes K_f))(\zeta)(r) \\
 &=(E(D_{t_1,t_2}(1\otimes K_f)D_{t_1,t_2}^*))(\zeta)(r) \\
 &=\int_{\Omega_0}\int_{\Omega_0}(D_{t_1,t_2}(\omega_1,\omega_2)K_fD_{t_1,t_2}
 (\omega_1,\omega_2)^*(\zeta))(r)d\mu(\omega_1)d\mu(\omega_2)\\
 &=\int_{\Omega_0\times\Omega_0}\int_\Omega k^1_{t_1,\omega_1}(s)k^2_{t_2,\omega_2}(s)\zeta(s)f(s,r)\overline{k^1_{t_1,\omega_1}(r)k^2_{t_2,\omega_2}(r)}ds d\mu(\omega_1)d\mu(\omega_2)\\
 &=\int_{\Omega}(\int_{\Omega_0\times\Omega_0}e^{\sqrt{2}it_1(W(\alpha_s-\alpha_r)(\omega_1))})e^{\sqrt{2}it_2(W(\beta_s-\beta_r)(\omega_2))}d\mu(\omega_1)d\mu(\omega_2))f(s,r)ds\\
 &=\int_\Omega\zeta(s)e^{-t_1\|\alpha_s-\alpha_r\|_{H_1}^2}e^{-t_2\|\beta_s-\beta_r\|_{H_2}^2}f(s,r)ds\\
 &=\int_\Omega\zeta(s)g^1_{t_1}(s,r)g^2_{t_2}(s,r)f(s,r)ds\\
 &=(K_{g^1_{t_1}g^2_{t_2}f}(\zeta))(r)\\
 &=((T_{t_1,t_2}(K_f))(\zeta))(r).
 \end{split}
 \end{eqnarray*}
The proof is completed by $w^*$-density as in \cite{AR191}.
\end{proof}
\begin{rem}\label{FREM}
Note that, one can easily prove a multivariate analogue of Corollary 4.3, Corollary 4.5 and Corollary 4.7 of \cite{AR13}. Also, using Ando's dilation theorem \cite{AN63}, Proposition 4.9 of \cite{AR13} is true for two variables.	
\end{rem}
\textbf{Acknowledgement:} The author thanks his thesis supervisor Prof. Parasar Mohanty for many stimulating discussions. He also thanks Guixang Hong, Christian Le Merdy, Gadadhar Misra and C. Arhancet for some suggestions. We also thank the referee for several constructive suggestion which significantly improved the presentation of the paper.

\end{document}